   \newcommand{\al}{\alpha}
    \newcommand {\bC} {\mathbb C}
    \newcommand {\bZ} {\mathbb Z}
    \newcommand{\bN}{\mathbb N}
\newtheorem*{theo+}           {{\bf Theorem}}
\newtheorem*{theorem-non}    {{\bf Theorem}}
\newtheorem{prop+}           {Proposition}
\newtheorem{coro+}           {Corollary}
\newtheorem{lemm+}           {Lemma}
\newtheorem{defi+}           {Definition}
\newtheorem{not+}            {Notation}
\newtheorem{rema+}           {Remark}
\begin{document}
\title[Zeros  for Gauss hypergeometric polynomials]{Zeros of a certain class of Gauss hypergeometric polynomials}

\author{Addisalem Abathun }
\address{Department of Mathematics, Addis Ababa University and Stockholm University}
\email{addisaa@math.su.se}
\author{Rikard B\o gvad}
\address{Department of Mathematics, Stockholm University}
\email{rikard@math.su.se}
\date \today
\begin{abstract}In this paper, we give results that partially prove a conjecture which was discussed in our previous work \cite{AARB}.  More precisely, we prove that as   $n\to \infty,$ the zeros of the polynomial$${}_{2}\text{F}_{1}\left[ \begin{array}{c} -n, \al n+1\\ \al n+2 \end{array} ; \begin{array}{cc} z \end{array}\right]$$
cluster on a certain curve defined as a part of a level curve of an explicit harmonic function. This generalizes work by Boggs, Driver,  Duren et. al \cite{KBD0,KDPD1,LDJG}, to a complex parameter $\alpha$.

\end{abstract}

\maketitle

\section{Introduction}
The generalized hypergeometric function ${}_{A}\text{F}_{B}$  with A numerator and B denominator parameters is defined by
\begin{equation}
{}_{A}\text{F}_{B}\left[ \begin{array}{c} (a)_A \\ (b)_B \end{array} ; \begin{array}{cc} z \end{array}\right]=
{}_{A}\text{F}_{B}\left[ \begin{array}{c} a_1,a_2,\ldots,a_A \\ b_1,b_2,\ldots,b_B \end{array} ; \begin{array}{cc} z \end{array}\right]
=\sum_{k=0}^{\infty}\frac{\prod_{j=1}^{j=A}(a_{j})_{k}}{\prod_{j=1}^{j=B}(b_{j})_{k}}\frac{z^k}{k!}\label{eq1} \end{equation}
 where $a_i\in\bC$, $b_j\in\bC\setminus \bZ_0^-$, $1\leq i\leq A$, $1\leq j\leq B$ and

 $(\alpha)_{k}=\alpha(\alpha+1)\ldots(\alpha+k-1)=\frac{\Gamma(\alpha+k)}{\Gamma(\alpha)}$ is the Pochhammer symbol.
    \\If any of the numerator parameters is a negative integer, say $a_1=-n,n\in\mathbb{N}$, the series terminates and reduces to a polynomial of degree $n$, called a {\it generalized hypergeometric polynomial}.
We are, in this note,  interested in  the asymptotic zero distribution of the polynomial coming from an ordinary hypergeometric polynomial
    \begin{equation}
   \label{hyp}
    p_n(z)={}_{2}\text{F}_{1}(-n,a_2(n);b_1(n);z)
   \end{equation}
   with complex parameters, dependent linearly on $n$, as $n\to \infty$.

 Based on experimental evidence  and results by previous authors, we made in \cite{AARB} a conjectures on the asymptotic behaviour of zeros of a certain class of such hypergeometric polynomials.   The  purpose of this note is to partially prove this conjecture.
   \\Our result extends results by earlier authors. We will describe them.

  In 1999, K.Driver and P.Duren studied the zeros of the hypergeometric polynomial
  $F(-n, kn + 1; kn + 2; z)$  for integers $k,n>0.$ They used the  Euler integral representation together with a general theorem of Borwein and Chen and showed the following result.
  \begin{theorem}\label{Th1} Let $k ~\text{and}~  n \in \bN.$  Then the zeros of the hypergeometric polynomial
  $$F(-n, kn + 1; kn + 2; z)$$
cluster on the loop of the lemniscate
 $$\{z:|z^k(z-1)|=\frac{k^k}{(k+1)^{k+1}};~Re(z)>\frac{k}{k+1}\}$$
 as $n \to \infty$.
 \begin{proof}
 See, \cite[Theorem 1]{KDPD1}

 \end{proof}
 \end{theorem}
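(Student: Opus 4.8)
The plan is to convert the statement into a question about the asymptotics of a one-parameter family of Laplace-type integrals and then feed those asymptotics into a potential-theoretic zero-counting theorem. The first step is the \emph{Euler reduction}: here the parameters satisfy $b=kn+1>0$ and $c-b=(kn+2)-(kn+1)=1>0$, so Euler's integral formula for ${}_2F_1$ applies, and since moreover the exponent $c-b-1=0$ and $\Gamma(kn+2)/(\Gamma(kn+1)\Gamma(1))=kn+1$, it collapses to
$$F(-n,kn+1;kn+2;z)=(kn+1)\int_0^1 t^{kn}(1-zt)^n\,dt=(kn+1)\int_0^1\bigl(t^k(1-zt)\bigr)^n\,dt .$$
Thus the zeros of $F(-n,kn+1;kn+2;z)$ are exactly those of the degree-$n$ polynomial $I_n(z):=\int_0^1\bigl(t^k(1-zt)\bigr)^n\,dt$, and it is the asymptotic zero distribution of $I_n$ that I must describe.

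Next comes the \emph{saddle-point bookkeeping}. Writing $f_z(t)=t^k(1-zt)$, Laplace's method tells us that the exponential rate $\lim|I_n(z)|^{1/n}$ equals the smallest value of $\max_{t\in\gamma}|f_z(t)|$ over deformations $\gamma$ of $[0,1]$ with fixed endpoints, and this min-max is realized either at an endpoint or at an interior critical point. The endpoints give $f_z(0)=0$ (negligible, as $f_z(t)\sim t^k$ there) and $f_z(1)=1-z$, while $\partial_t f_z(t)=t^{k-1}\bigl(k-(k+1)zt\bigr)$ shows the only interior critical point is $t_\ast(z)=\frac{k}{(k+1)z}$, with critical value
$$f_z\bigl(t_\ast(z)\bigr)=\frac{k^k}{(k+1)^{k+1}z^k} .$$
So everything reduces to the competition between the endpoint rate $|1-z|$ and the saddle rate $k^k/\bigl((k+1)^{k+1}|z|^k\bigr)$.

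The third step is the \emph{zero counting}. Here I would invoke the general theorem of Borwein and Chen on zeros of integrals of $n$th powers: the normalized zero-counting measures of $I_n$ converge, and the zeros can accumulate only where $\lim|I_n(z)|^{1/n}$ fails to be locally the modulus of a single analytic germ, i.e. where the two rates above coincide,
$$|1-z|=\frac{k^k}{(k+1)^{k+1}|z|^k}\qquad\Longleftrightarrow\qquad\bigl|z^k(z-1)\bigr|=\frac{k^k}{(k+1)^{k+1}} ,$$
which is exactly the lemniscate in the statement. To single out the correct loop I would work out the steepest-descent geometry of $f_z$: one checks that it is precisely when $\operatorname{Re}z>k/(k+1)$ that the interior saddle $t_\ast(z)$ lies on the segment $[0,1]$ and ties with the endpoint $t=1$ for the maximum of $|f_z|$ there, so that $I_n(z)$ carries two stationary contributions of equal modulus $\sim|1-z|^n$ whose $n$-dependent phases $\arg f_z(t_\ast(z))^n$ and $\arg f_z(1)^n$ wind relative to one another as $z$ traverses the loop; this produces of order $n$ sign changes of $I_n$ along the arc, and combined with $\deg I_n=n$ and the argument principle it confines essentially all the zeros to a neighbourhood of it, whereas for $\operatorname{Re}z<k/(k+1)$ the endpoint alone realizes the min-max and no zeros cluster. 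The conclusion is clustering precisely on $\{\,\bigl|z^k(z-1)\bigr|=k^k/(k+1)^{k+1},\ \operatorname{Re}z>k/(k+1)\,\}$.

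The main obstacle will be this third step: verifying that $f_z(t)=t^k(1-zt)$ satisfies the hypotheses of the Borwein--Chen theorem, which comes down to tracking how the topology of the level sets $\{\operatorname{Re}\log f_z=\text{const}\}$ in the $t$-plane changes with $z$, determining exactly for which $z$ the segment $[0,1]$ must be routed over the saddle $t_\ast(z)$ (thereby producing the half-plane condition $\operatorname{Re}z>k/(k+1)$), and then upgrading ``the zeros cluster only on the curve'' to ``the zeros actually fill the curve'', which requires the oscillation/sign-change argument rather than a mere bound on the exponential rate. The Euler reduction and the saddle-value computation, by comparison, are routine.
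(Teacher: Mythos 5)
Your plan is essentially the proof the paper relies on: the theorem is proved in the paper only by citation to Driver--Duren, whose argument is exactly your route --- the Euler reduction $F(-n,kn+1;kn+2;z)=(kn+1)\int_0^1\bigl(t^k(1-zt)\bigr)^n dt$, the saddle $t_\ast=\frac{k}{(k+1)z}$ with critical value $\frac{k^k}{(k+1)^{k+1}z^k}$ competing against the endpoint value $1-z$, and the Borwein--Chen clustering theorem --- and it is also the template the present paper follows for complex $\al$. One caveat on your third step: the condition $\operatorname{Re}z>\frac{k}{k+1}$ is not that the saddle lies on $[0,1]$ (that happens only for real $z\ge \frac{k}{k+1}$), but that the steepest-descent decomposition forces the contour from $0$ to $1$ to be routed through the saddle (in the paper's notation, $1\in D_{1/z}$, i.e.\ $z\in E$), which is precisely the verification you yourself flag as the remaining obstacle.
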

 In 2001, K.Boggs and P.Duren gave the following extension of Theorem \ref{Th1}
\begin{theorem}
For arbitrary $k > 0$ and $\emph{l}>0$  the zeros of the hypergeometric polynomial $$F(-n, kn + l+1; kn +l+ 2; z)$$ cluster on the loop of lemniscate $$|z^k(z-1)|=\frac{k^k}{(k+1)^{k+1}}~ \text{with}~ Re(z)>\frac{k}{k+1}$$.
\end{theorem}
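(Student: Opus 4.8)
The plan is to pass, via Euler's integral representation, from the polynomial to a parameter‑dependent integral, and then to run the Laplace/saddle‑point analysis of Driver--Duren (which rests on the Borwein--Chen theorem), checking that the extra amplitude factor produced by the parameter $l$ is harmless. Since $\mathrm{Re}(kn+l+2)>\mathrm{Re}(kn+l+1)>0$ for $n$ large (here $k,l>0$), Euler's formula gives, with $h(t,z):=t^{k}(1-zt)$ (using the branch of $t^{k}$ positive on $(0,1]$),
$$F(-n,kn+l+1;kn+l+2;z)=(kn+l+1)\int_{0}^{1}t^{\,l}\,\bigl[t^{k}(1-zt)\bigr]^{n}\,dt=(kn+l+1)\int_{0}^{1}t^{\,l}\,h(t,z)^{n}\,dt,$$
so for $z\neq 0$ the zeros of the polynomial are exactly the zeros of $J_{n}(z):=\int_{0}^{1}t^{\,l}h(t,z)^{n}\,dt$. (Substituting $u=zt$ rewrites $J_{n}(z)=z^{-(kn+l+1)}B_{z}(kn+l+1,n+1)$, an incomplete Beta function whose two large parameters stay in the fixed ratio $k:1$; one could alternatively quote known zero‑asymptotics for such functions.)

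Next I would analyse $J_{n}$ by Laplace's method along $[0,1]$, treating $t^{l}$ as a slowly varying amplitude: bounded on $[0,1]$, positive and continuous on $(0,1]$, with at worst an integrable power singularity at $t=0$, where $h$ vanishes anyway, so it does not affect the exponential rate. In $t$ the function $h(\cdot,z)$ has a single interior critical point $t_{0}(z)=\frac{k}{(k+1)z}$, with
$$h(1,z)=1-z,\qquad h\bigl(t_{0}(z),z\bigr)=\frac{k^{k}}{(k+1)^{k+1}\,z^{k}}.$$
For $z$ such that, after an admissible contour deformation, $|h(t,z)|$ attains its maximum at a single dominant point --- either the endpoint $t=1$ (when $|1-z|$ strictly exceeds the saddle value) or the saddle $t_{0}(z)$ --- Laplace's method yields $J_{n}(z)\sim c\,n^{-\rho}(\text{dominant value})^{n}$ with $c\neq 0$ (the amplitude at the dominant point being $t^{l}|_{t=1}=1\neq 0$, resp.\ $t_{0}(z)^{l}\neq 0$), hence $J_{n}(z)\neq 0$ for all large $n$ and no zeros accumulate there.

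Consequently the only candidate accumulation set is the equimodular locus where endpoint and saddle contributions have equal size and can cancel, $|h(1,z)|=|h(t_{0}(z),z)|$, which reads
$$|z|^{k}\,|z-1|=\frac{k^{k}}{(k+1)^{k+1}},$$
i.e.\ precisely the (single‑valued) lemniscate in the statement. To single out the loop with $\mathrm{Re}(z)>\frac{k}{k+1}$, note that exactly on that side the saddle $t_{0}(z)$ is genuinely ``inside'' $[0,1]$ (for real $z$: $t_{0}(z)\in(0,1)$ iff $z>\frac{k}{k+1}$, and then $h(1,z)$ and $h(t_{0},z)$ have opposite signs, so $h(\cdot,z)^{n}$ really changes sign on the segment), whence the steepest‑descent path from $t_{0}$ reaches the endpoint region and the two contributions interfere; on the other side one of them is inaccessible and strict domination persists. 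Making this dichotomy uniform in $z$ is the content of the Borwein--Chen theorem invoked by Driver--Duren, which applies once the mild hypotheses on the amplitude $t^{l}$ (non‑vanishing at the dominant point, integrable singularity only at $t=0$) are verified.

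The step I expect to be the real obstacle is exactly this last verification: turning ``balance $\Rightarrow$ zeros, strict domination $\Rightarrow$ no zeros'' into a statement holding uniformly for $z$ near the curve, and in particular confirming that replacing the Driver--Duren integrand (amplitude $\equiv 1$, integer $k$) by one with a non‑integer power $t^{l}$ and a non‑integer $t^{k}$ inside the $n$‑th power changes nothing --- the subdominant singularity at $t=0$ must be controlled, and the contour deformations underlying the Borwein--Chen argument must be checked to remain valid when the amplitude is no longer entire. Since the difference from the Driver--Duren integrand is only a slowly varying, everywhere‑subdominant factor, I expect this reduction to go through, so that the theorem follows from their $l=0$, integer‑$k$ case.
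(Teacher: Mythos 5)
First, note that the paper does not actually prove this statement: it is quoted from Boggs--Duren and the ``proof'' in the paper is just the citation to \cite[Theorem 2]{KDPD1}, so the comparison is with that cited argument and with the analogous machinery the paper develops for complex $\alpha$ (the decomposition of the Euler integral at $1/z$, the steepest-descent Lemma \ref{lemma:steepest descent}, Lemma \ref{lemma:saddlepoint} for $I_1$, and the change of variables plus Lemma \ref{lemma:general} for $I_2$). Your setup is correct and is the same in spirit: the Euler integral with $c-b=1$ does give $(kn+l+1)\int_0^1 t^l\,[t^k(1-zt)]^n\,dt$, the saddle is $t_0=\frac{k}{(k+1)z}$ with $h(t_0,z)=\frac{k^k}{(k+1)^{k+1}z^k}$, the endpoint value is $1-z$, and the equimodular locus is the stated lemniscate; the observation that $t^l$ is a harmless, nonvanishing slowly varying amplitude is also right.

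However, there are two genuine gaps. First, your argument, even if the Laplace/steepest-descent estimates were carried out, only shows that zeros cannot accumulate \emph{off} the balance curve (strict domination $\Rightarrow J_n\neq 0$); the theorem also asserts that the zeros \emph{cluster on} the loop, i.e.\ that every point of the loop with $\mathrm{Re}\,z>\frac{k}{k+1}$ attracts zeros, and nothing in the proposal addresses this direction (one needs the interference argument: on the curve the two contributions have equal modulus and the phase of their ratio rotates like $n$ times a nonconstant argument, so zeros are produced near each point --- this is what the explicit two-term asymptotics in Boggs--Duren, or the Borwein--Chen theorem in the $l=0$ integer case, deliver). Moreover, the selection of the correct loop ($\mathrm{Re}\,z>\frac{k}{k+1}$, the analogue of the paper's region $E$) is argued only heuristically and only for real $z$, whereas it must hold uniformly for all $z$ near the curve. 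Second, the closing claim that the theorem ``follows from their $l=0$, integer-$k$ case'' is not a reduction: the statement allows arbitrary real $k>0$, and the non-integer factor $t^k$ sits \emph{inside} the $n$-th power --- it is part of the phase, not a slowly varying amplitude like $t^l$ --- so for non-integer $k$ the integrand is no longer of the incomplete-polynomial form to which the Borwein--Chen theorem applies (there is a branch point of the phase at $t=0$). This is exactly why Boggs--Duren had to replace the Borwein--Chen argument by a saddle-point analysis, and why the present paper's complex-$\alpha$ version has to handle the spiralling level curves near $t=0$ (Lemma \ref{lemma:steepest descent} and the $w_\epsilon$ truncation). So the one step you defer as ``expected to go through'' is precisely the substantive content of the theorem; as written, the proposal is a correct plan for the zero-free part but does not prove the clustering statement.
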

\begin{proof}
 See, \cite[Theorem 2]{KDPD1}

 \end{proof}
In this paper, we continue to study the asymptotic properties of zeros of hypergeometric function $F(-n, kn + 1; kn + 2; z)$ to allow $k$ to be any complex number with non negative real part. We have the following theorem.

\begin{theorem-non} {\it
 Let $\al = \eta + i \zeta $  where  $\eta > 0$ and $\zeta \neq 0$. The zeros of the hypergeometric polynomials
 $$p_n(z)={}_{2}\text{F}_{1}\left[ \begin{array}{c} -n, \al n + 1 \\ \al n+2 \end{array} ;
 \begin{array}{cc} z \end{array}\right],$$ asymptotically cluster on the level curve
  \begin{equation*}\label{lami}
{ |z^\al(1-z)|}=\vert(\frac{\al}{(\al+1)})^{\al}(\al+1)^{-1}\vert.
 \end{equation*}
}
 \end{theorem-non}

 In  \cite{AARB} it was proved, using indirect methods, that a convergent sequence of zeroes(described as a limit of corresponding measures has to cluster along a level curve of the function
 ${ |z^\al(1-z)|}$. This result identifies which level curve. As we will see, it is the unique level curve through the saddle point of the function. As seen from the results quoted, the zeroes will not cluster on the whole level curve. We prove in Lemma \ref{lemma:principalbranch} and Proposition \ref{lemma:zero free} that there is a certain region, containing the left half-plane that is zero-free. In  \cite{A}  we also obtain the density of the zeros. Some pictures of the accumulation of the zeroes may also be found there.

  The proof  uses, following the references above,   Euler integral representation of these hypergeometric polynomials and the saddle point method.
   For convenience we will give a short description below of the saddle point method and the Euler integral representation of Gauss hypergeometric functions.

   \section{ Integral representation of Hypergeometric function and the Saddle point method}
   \subsection{Integral Representation of Hypergeometric function}
The Gauss hypergeometric series ${}_{2}\text{F}_{1}(a,b;c;z)$ has the following integral representation due to Euler See \cite[Theorem 2.2.1]{A}. If $Re(c)>Re(b)>0,$ then
\begin{equation}
{}_{2}\text{F}_{1}\left[ \begin{array}{c} a,b \\ c \end{array} ; \begin{array}{cc} z \end{array}\right]
=\frac{\Gamma(c)}{\Gamma(b)\Gamma(c-b)}\int_0^1t^{b-1}(1-t)^{c-b-1}(1-tz)^{-a} dt,
 \label{eq2}
 \end{equation}
 in the $z$ plane cut along the real axis from 1 to $\infty.$  Here it is understood that $\arg t= \arg (1-t)=0$ and $(1-zt)^{-a}$ has its principal value.

 Choosing $a=-n$ and $c=b+1$ and $b=\al n+1$ for $n \in \bN$ for an arbitrary complex number $\al,  Re(\al)>0$, we
get
$${}_{2}\text{F}_{1}\left[ \begin{array}{c} -n, \al n+1\\ \al n+2 \end{array} ; \begin{array}{cc} z \end{array}\right]
=(\al n+1)\int_0^1t^{\al n}(1-tz)^{n} dt.$$

 \subsection{The Saddle Point Method }
  We  use the saddle point method \cite{CBSO,NG} to get an asymptotic expansion for the Euler integral of the hypergeometric polynomial $F(-n, \al n+1; \al n+2; z)$, which will help us to understand the asymptotic distribution of the zeros.

  Let $\gamma$ be a contour in the complex plane and the function $\phi$ is
holomorphic in a neighborhood of this contour. We want to understand
the asymptotics, as $n \to \infty$, of  an integral:
$$I_n(z)=\int_{\gamma}e^{n\phi(z)}dz.$$
The idea of the saddle point method
is to deform the contour in such a way that the main contribution to the
integral comes from a neighborhood of a single point.
Consider the derivative $\phi'(z)$ of an analytic function $\phi(z)=u(x,y)+i v(x,y)$. It is an immediate consequence of the Cauchy-Riemann equations that the gradient of $u$ is given(as a complex vector) by $\overline \phi'(z)$. It is orthogonal to the gradient of $v$, and hence if $\phi'(z_0)\neq 0$, there is a unique curve through $z_0$ characterized by both the property that $Im \phi(z)=Im \phi(z_0)$ and that along this curve $Re \phi(z)$ grows fastest. This curve is called a curve of {\it steepest ascent}(or {\it descent,} if we reverse the direction).

A saddle point is a point $t_0$ where $\phi'(t_0)=0$, and if furthermore $\phi''(t_0)\neq 0,$ then it is called a simple saddle point.
Then in a small neighborhood of a simple saddle point $t_0$ the level curve $Im \phi(z)=Im \phi(t_0)$ consists
of two analytic curves that intersect orthogonally at the point $t_0$ and separate
the neighborhood of $t_0$ in four sectors. Seen from $t_0$ there are four curve segments starting at $t_0$, and along two of them $Re \phi(z)$ decreases from $Re \phi(z_0).$ Call these curves $\gamma_1,~\gamma_2$. We will use a simple form of the saddle point method, which can be found in \cite{CBSO, IA}

\begin{proposition}
\label{prop:saddlepoint}
Suppose there is only one simple saddle point $t_0$ and $\gamma_1$ contains the point $a$ and $\gamma_2$ contains the point $b$. Furthermore, we assume that the curves from $a$ to $t_0$ along $\gamma_1$ and from  $t_0$ to $b$ along $\gamma_2$ are finite. Then
$$I_n(z)=\sqrt{\frac{2\pi}{-\phi''(t_0)}}n^{-\frac{1}{2}}e^{n\phi(t_0)}[1+O(n^{-1}))]$$
\end{proposition}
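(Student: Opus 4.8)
The plan is to show that all the mass of $I_n(z)=\int_\gamma e^{n\phi(z)}\,dz$ concentrates in a neighborhood of $t_0$; no further deformation is needed, since the hypotheses already place the contour $\gamma=\gamma_1\cup\gamma_2$ along the steepest descent paths through $t_0$, so that $\operatorname{Im}\phi\equiv\operatorname{Im}\phi(t_0)$ and $\operatorname{Re}\phi$ strictly decreases away from $t_0$ on each branch. First I would fix a small $\delta>0$ and split $I_n=\int_{|z-t_0|<\delta}+\int_{|z-t_0|\ge\delta}$. On the finite outer pieces of $\gamma_1,\gamma_2$, compactness together with the strict monotonicity of $\operatorname{Re}\phi$ yields a constant $c>0$ with $\operatorname{Re}\phi(z)\le\operatorname{Re}\phi(t_0)-c$, so the outer integral is bounded by $Ce^{n(\operatorname{Re}\phi(t_0)-c)}$, which is exponentially smaller than the modulus $\sim e^{n\operatorname{Re}\phi(t_0)}n^{-1/2}$ of the claimed main term and is therefore absorbed into the error.

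For the inner piece I would introduce the standard Morse-type change of variable. Since
$$\phi(t_0)-\phi(z)=-\tfrac12\phi''(t_0)(z-t_0)^2\bigl(1+O(z-t_0)\bigr)$$
with $\phi''(t_0)\neq0$, the right-hand side admits a holomorphic square root near $t_0$, so there is a biholomorphism $z\mapsto w$ with $w(t_0)=0$ and $\phi(t_0)-\phi(z)=w^2$; differentiating gives $\dfrac{dz}{dw}\Big|_{w=0}=\sqrt{\dfrac{-2}{\phi''(t_0)}}$, the branch being pinned down by the orientation of $\gamma$ from $a$ to $b$. Along the descent paths $\phi(t_0)-\phi(z)$ is real and positive, so $w$ is real, and the two branches $\gamma_1,\gamma_2$ splice into a single real segment $(-\delta',\delta')$ traversed monotonically. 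Hence
$$\int_{|z-t_0|<\delta}e^{n\phi(z)}\,dz=e^{n\phi(t_0)}\int_{-\delta'}^{\delta'}e^{-nw^2}\,\frac{dz}{dw}\,dw.$$

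Finally I would expand $\dfrac{dz}{dw}=\dfrac{dz}{dw}\Big|_{0}+O(w)$ and complete the Gaussian to the whole line. The leading term is
$$e^{n\phi(t_0)}\sqrt{\frac{-2}{\phi''(t_0)}}\int_{-\infty}^{\infty}e^{-nw^2}\,dw=e^{n\phi(t_0)}\sqrt{\frac{-2}{\phi''(t_0)}}\sqrt{\frac{\pi}{n}}=\sqrt{\frac{2\pi}{-\phi''(t_0)}}\,n^{-1/2}e^{n\phi(t_0)},$$
which is precisely the asserted main term. Extending the tails $\int_{|w|>\delta'}$ to the full line costs only $O(e^{-n\delta'^2})$; the odd part of the $O(w)$ correction integrates to zero by symmetry, while its even part, together with a Watson-type term-by-term integration of the full Taylor expansion of the Jacobian against $e^{-nw^2}$, produces an asymptotic series in powers of $n^{-1}$ whose first correction is $O(n^{-1})$, giving the bracket $[1+O(n^{-1})]$. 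The main obstacle is the bookkeeping in this last step: one must verify that the square-root coordinate is genuinely biholomorphic near $t_0$, that the orientations of $\gamma_1$ and $\gamma_2$ really fuse into one monotone real interval with the correct branch of $\sqrt{-2/\phi''(t_0)}$ (an overall sign the stated formula silently fixes), and then control the remainder uniformly in $n$.
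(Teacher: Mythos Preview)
Your argument is correct and is exactly the standard proof of the saddle point formula: reduce to a neighborhood of $t_0$ by compactness and strict decrease of $\operatorname{Re}\phi$ along the descent branches, straighten via the Morse change of variable $w^2=\phi(t_0)-\phi(z)$, and evaluate the resulting Gaussian with a Watson-type expansion of the Jacobian. The caveats you flag (biholomorphicity of the square-root coordinate, matching the orientation to select the branch of $\sqrt{-2/\phi''(t_0)}$, uniform remainder control) are precisely the points one has to check, and none of them presents a real difficulty here since $t_0$ is a \emph{simple} saddle and the paths are assumed finite.

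The only discrepancy with the paper is that the paper does not actually prove this proposition at all: it is quoted as a black box from the references \cite{CBSO, IA}, so there is no ``paper's own proof'' to compare against. Your write-up is the classical argument one finds in those sources (e.g.\ de Bruijn or Olver), so in effect you have supplied what the paper outsourced.
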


\section{Proof  the Theorem}
We are (following \cite{LDJG}) going to prove the theorem by applying the saddle point method to evaluate the asymptotic expansion of the Euler integral of the polynomial

$${}_{2}\text{F}_{1}\left[ \begin{array}{c} -n, \al n+1\\ \al n+2 \end{array} ; \begin{array}{cc} z \end{array}\right]
=(\al n+1)\int_0^1t^{\al n}(1-zt)^{n} dt,$$  where $\al=\eta + i\zeta,$ $\eta>0$ and $\zeta\neq 0.$ We let
$$p_n=\int_0^1 [g(t)]^ndt,$$

where $g(t)=t^{\al }(1-tz)$ is a function of the complex variable $t$, and we have chosen the ordinary branch of the complex logarithm(which specializes to the real logarithm on $[0,1]$). The multivalued function $g(t)$ vanishes at $0$ and $1/z$, and has $t=0$ as the only branch point.

\subsection{The saddlepoint}
To find the saddle points consider $\phi(t)=\al\log t + \log (1-zt)$, for the principal branch determination of the logarithm. For each fixed $z\neq 0$, the function $\phi$ has two branch points, at $t=0$ and $t=\frac{1}{z}$. Since
\begin{equation}
\label{eq:dlog}
\phi'(t)=\frac{{\al-zt((\al+1))}}{t(1-zt)}
\end{equation}
we see that $\phi'(t)=0$ if and only if  $t=t_0:=\frac{\al}{(\al+1)z}$, this is the only saddle point of $\phi$.
Since $\phi''(t)=-(1+\al )^3z^2/\al $, it is a simple saddle point if $\al \neq -1$ and $z\neq 0$.(The first condition is true by assumption, and it is easy to see from the description of the hypergeometric polynomials that $z=0$ is not a zero of any $p_n$.) The fact that there is only one saddle point, makes it easy to exploit the local description for some global information. Note that the fact that the derivative $\phi'$ is defined in the whole $\mathbb C$ by (\ref{eq:dlog}) shows that paths of steepest ascent/descent are unambiguously defined, irrespective of choices of branches of logarithms. This follows since the gradient(considered as a complex number) at $t$ of $Re \phi$ by the Cauchy-Riemann equations is just $\overline{\phi'(t)}$.
 \begin{lemma}
 \label{lemma:steepest descent}  Let $t_0=\frac{\al}{(\al+1)z}$ as above be the saddle point of $g(t)$, where $\al \neq -1$ and $z\neq 0$.
 \begin{enumerate}
   \item[i)]  There are exactly two paths, of steepest descent,  starting at the saddle point $t_0$ along which $Re \phi(t)$ decreases,  one,  say $\gamma_1$ goes to $0$ and the other, say $\gamma_2$, to $\frac{1}{z}.$
    \item[ii)] There are exactly two paths $\delta_1$ and $\delta_2$, of steepest ascent,  starting at the saddle point $t_0$ along which $Re \phi(t)$ increases,  both going to infinity. Together they split the plane into two (closed) simply-connected regions $D_0$ and $D_{1/z}$, with the property that if $t\in D_0$, there is a path of steepest descent from $t$ to $0$, and correspondingly for $D_{1/z}$.
     \end{enumerate}
     \end{lemma}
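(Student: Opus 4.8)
The plan is to realise the steepest descent and ascent paths as the trajectories of the \emph{single-valued} meromorphic vector field $V(t)=\overline{\phi'(t)}$ (for steepest ascent) and $-V$ (for steepest descent) on $\bC$: by (\ref{eq:dlog}) this field has $t_0$ as its only zero and simple poles at $t=0$ and $t=1/z$, and along each individual trajectory one may choose a continuous branch of $\phi=u+iv$, for which $\nabla u=V$ where $u=\mathrm{Re}\,\phi$. Hence the flow is locally gradient, $u$ is \emph{strictly} monotone along every nonconstant trajectory, there are no periodic orbits and no homoclinic loop at $t_0$, and so, by the standard structure theory of planar gradient-like flows, every trajectory either converges to $t_0$ or leaves every compact subset of $\bC\setminus\{0,1/z\}$, in which case its accumulation set is contained in $\{0,1/z,\infty\}$. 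Throughout I will use the boundary behaviour $\mathrm{Re}\,\phi\to-\infty$ as $t\to0$ or $t\to1/z$ (for every branch) and $\mathrm{Re}\,\phi\to+\infty$ as $t\to\infty$ (because $\mathrm{Re}(\al+1)=\eta+1>0$ and the branch ambiguity of $\phi$ is bounded), together with the elementary identities $\tfrac{d}{ds}|t|^{2}=\pm2\eta$ along $\pm V$ near $t=0$ and $\tfrac{d}{ds}|t-1/z|^{2}=-2$ along $-V$ near $t=1/z$.

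Since $\phi''(t_0)\neq0$, the expansion $\phi(t)-\phi(t_0)=\tfrac12\phi''(t_0)(t-t_0)^{2}(1+o(1))$ gives, exactly as in the local picture recalled above, precisely two steepest descent separatrices $\ga_1,\ga_2$ and two steepest ascent separatrices $\de_1,\de_2$ issuing from $t_0$, in alternating cyclic order $\ga_1,\de_1,\ga_2,\de_2$. Following $\ga_1$ in the direction of decreasing $u$, the value of $u$ stays strictly below $u(t_0)$, so this trajectory cannot converge to $t_0$; being non-recurrent it leaves every compact subset of $\bC\setminus\{0,1/z\}$, and since $\mathrm{Re}\,\phi\to+\infty$ near $\infty$ it cannot approach $\infty$. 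Therefore $\ga_1$ converges to $0$ or to $1/z$, convergence (rather than mere accumulation) being guaranteed by the monotonicity of $|t|$ near $0$ and of $|t-1/z|$ near $1/z$. The same applies to $\ga_2$, and symmetrically each of $\de_1,\de_2$ converges to $\infty$.

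For part (i) it remains to show that $\ga_1$ and $\ga_2$ do not share an endpoint. If both converged to the same point $p\in\{0,1/z\}$, then, distinct trajectories being disjoint away from $t_0$, the set $J:=\overline{\ga_1}\cup\overline{\ga_2}$ would be a Jordan curve in $\bC\cup\{\infty\}$ through $t_0$ and $p$ only; since $\infty\notin J$, one of its complementary regions $\Om$ satisfies $\infty\notin\overline{\Om}$. Near $t_0$ the arcs $\ga_1,\ga_2$ bound two sectors, one containing the initial direction of $\de_1$ and the other that of $\de_2$, so one ascent separatrix, say $\de_1$, enters $\Om$, and it is then trapped in $\overline{\Om}$: it cannot cross the trajectories $\ga_1,\ga_2$, cannot return to $t_0$ (where $u$ is below its values on $\de_1$), cannot reach $0$ or $1/z$ (where $u\to-\infty$, incompatible with $u$ increasing along $\de_1$, the accumulation at $0$ being excluded by $\tfrac{d}{ds}|t|^{2}>0$), and cannot reach $\infty\notin\overline{\Om}$. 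But then $\de_1$ has no admissible limit inside the compact set $\overline{\Om}$, contradicting its non-recurrence. Hence exactly one of $\ga_1,\ga_2$ reaches $0$ and the other $1/z$, which is (i).

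Part (ii) then follows quickly: $K:=\overline{\de_1}\cup\overline{\de_2}$ is a Jordan curve through $t_0$ and $\infty$ splitting $\bC\cup\{\infty\}$ into two simply connected regions, whose closures are the claimed $D_0$ and $D_{1/z}$; since $\ga_1,\ga_2$ leave $t_0$ into the two \emph{different} sectors cut out by $\de_1,\de_2$ and terminate at $0$ and $1/z$, these two points lie in the two different components, fixing the labels. Given $t\in D_0$ with $t\neq t_0$ and not on $K$, the steepest descent trajectory from $t$ stays in $D_0$ (it cannot cross $\de_1$ or $\de_2$), cannot approach $\infty$ or $1/z\notin D_0$, and cannot converge to $t_0$ (whose descent-stable set is exactly $\de_1\cup\de_2\cup\{t_0\}\subset K$), so it converges to $0$; the cases $t=t_0$ and $t\in\de_i$ are dealt with by first running to $t_0$ and then along the descent separatrix reaching $0$, and $D_{1/z}$ is symmetric. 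I expect the real difficulty to be concentrated in the two Jordan-curve arguments of the last two paragraphs: pinning down the global fate of the trajectories and excluding infinite spiralling in spite of the multivaluedness of $u$, which is exactly why the proof is run through the dynamics of the single-valued field $V$ rather than through a maximum principle for $u$.
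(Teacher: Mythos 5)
Your argument is correct in outline but takes a genuinely different route from the paper at the crucial step. The paper proves that $\ga_1$ and $\ga_2$ cannot share an endpoint by a \emph{local} analysis of the phase at the two singular endpoints: near $t=1/z$ the function $\phi$ behaves like $\log w+\de$, so only one steepest curve with the prescribed constant value of $\mathrm{Im}\,\phi$ enters that point, while near $t=0$ it behaves like $\al\log t$, whose constant-phase curves are spirals, unique for a given value precisely because $\zeta\neq 0$; since both descent paths carry the phase $\mathrm{Im}\,\phi(t_0)$, they must end at different points. You instead argue \emph{globally}, through the dynamics of the single-valued field $V=\overline{\phi'}$: Poincar\'e--Bendixson-type structure theory plus the Jordan-curve trapping of an ascent separatrix inside $\overline{\ga_1}\cup\overline{\ga_2}$. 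Your route never uses $\zeta\neq0$ (so it also covers the real case of Driver--Duren) and replaces the spiral bookkeeping at $t=0$ by soft topology together with the radial estimates $\frac{d}{ds}|t|^2\approx\pm2\eta$, $\frac{d}{ds}|t-1/z|^2\approx\pm2$, which are a nice way to rule out accumulation at the poles; the paper's argument is shorter but remains a sketch at exactly the points you treat topologically.

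There is, however, one load-bearing assertion whose stated justification is invalid: ``the flow is locally gradient, $u$ is strictly monotone along every nonconstant trajectory, hence there are no periodic orbits and no homoclinic loop at $t_0$.'' Since $u=\mathrm{Re}\,\phi=\eta\log|t|-\zeta\arg t+\log|1-zt|$ is genuinely multivalued (monodromy $-2\pi\zeta$ around $t=0$), monotonicity of a continuous branch along a trajectory does \emph{not} exclude closed orbits: for $\phi(t)=i\log t$ every trajectory of $\overline{\phi'}$ is a circle around $0$, although the continued $u=-\arg t$ is strictly monotone along each of them. Your uses of Poincar\'e--Bendixson (``converges to $t_0$ or leaves compacta,'' the non-recurrence invoked in the trapping step, and the final claim about descent trajectories in $D_0$, whose interior contains the branch point $0$ or $1/z$) all rest on this exclusion, so it needs a proof. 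A short monodromy argument supplies it: a periodic orbit or a homoclinic loop at $t_0$ is a simple closed curve, so its winding numbers $(k,m)$ about $0$ and $1/z$ lie in $\{0,\pm1\}$ and coincide when both poles are enclosed; constancy of the continued $\mathrm{Im}\,\phi$ along a trajectory forces the monodromy $2\pi(k\eta+m)$ to vanish, which gives either $(k,m)=(0,0)$ -- and then $u$ is single-valued along the loop and strict monotonicity is contradicted -- or $\eta\in\{0,-1\}$, contradicting $\eta>0$. The same multivaluedness affects two smaller points: ``cannot return to $t_0$ where $u$ is below its values on $\de_1$'' should be replaced by the stable-manifold argument you already use in part (ii), and ``$\mathrm{Re}\,\phi\to+\infty$ near $\infty$ because the branch ambiguity is bounded'' is not justified for a trajectory that might spiral outward; the clean fix is your own radial estimate at the third singular point, namely $\phi'(t)=(\al+1)/t+O(|t|^{-2})$ for large $|t|$, whence $\frac{d}{ds}|t|^2=2(\eta+1)+o(1)>0$ along ascent, which simultaneously prevents descent separatrices from escaping to infinity and upgrades unboundedness of $\de_1,\de_2$ to convergence to $\infty$, as you need for $K$ to be a Jordan curve on the sphere. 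With these repairs your proof is complete and, in my view, more robust than the published sketch.
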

     \begin{proof} i) By the fact that $\phi''(t_0)=\frac{z^2(\al+1)^3}{\al}=|\phi''(t_0)|e^{i\beta}\neq 0,$ we know the local behavior of $\phi(t)$ at $t=t_0$(it will behave like $\beta t^2,\ \beta\in \mathbb C$ around the origin.) We see in particular that there are indeed locally two paths with $\text{Im} \phi(t)=\text{Im}  \phi(z_0)$ along which  $\text{Re} \phi(t)$ decreases. By the fact that $t_0$ is the only saddle point, these paths have to end up in either of the two points $0,1/z$, where $\text{Re} \phi(t)=-\infty$(the paths cannot vanish to infinity since we have that $\alpha=\eta+\zeta i, $ with $\eta>0$, so that $\lim_{t\to\infty}\text{Re}\phi(t)=\infty$.) It remains to check that they do not end up in the same point. They cannot intersect, again since there is only one saddle point.  By calculating the imaginary part $\text{Im}  \phi(t) $ around each branch point we find that there is a most one path {\it starting} at the branch point with constant $\text{Im}  \phi(t) $. Around $t=1/z$ the function will behave as $\log w+\delta$, for som $\delta\in \mathbb C$, and so there will at most be one ray corresponding to constant imaginary value. Around $t=0$, the function will behave as $t^\alpha$, with imaginary part $\zeta log\vert t\vert+\eta Arg t$, and this will be constant along a spiral approaching  $t=0$, (since $\zeta\neq 0$ by assumption) with a unique spiral corresponding to a fixed imaginary value. Hence the two paths have to end up at different points.

     ii) The second part follows similarily since the two paths cannot intersect themselves or each other, and since $\lim_{t\to\infty}\text{Re}\phi(t)=\infty$.
\end{proof}

 There is given $z\in \mathbb C$ two cases that may occur: either $1\in D_0 $ or $1\in D_{1/z}$. Define $E=\{ z: 1\in D_{1/z}\}$. In the case that $\alpha\in \mathbb R$, $z\in E$ is described by the condition $Re z\geq \al/(\al+1)$(see \cite{LDJG}), but we do not  have a good description of it for general $\alpha$. However we have an indirect description of it in terms of the descent picture of $w^\alpha(1-w)$. As above this function has a unique simple saddle point at $\alpha/(\al+1)$, and the paths of steepest ascent split the complex plane into two regions $\tilde D_0$ and $\tilde D_1$, containing the two branch points of the logarithm $0$ and $1$ respectively.
 \begin{lemma} $E=\tilde D_1$.
 \end{lemma}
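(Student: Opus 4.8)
The plan is to identify the set $E$, which is defined through the $t$-plane steepest-descent picture of $\phi(t)=\al\log t+\log(1-zt)$, with the $w$-plane steepest-descent picture of the auxiliary function $\psi(w):=\al\log w+\log(1-w)$ (a branch of $\log(w^\al(1-w))$), by means of the elementary substitution $w=zt$. First I would record the identity $\phi(t)=\psi(zt)-\al\log z$, or — to avoid carrying branch choices around — the equivalent statement $\phi'(t)=z\,\psi'(zt)$ for the single-valued rational derivatives, which is an immediate check from \eqref{eq:dlog}. Since $z\neq 0$, the map $t\mapsto zt$ is a conformal bijection of $\bC$, and because $\phi$ differs from $t\mapsto\psi(zt)$ by the additive constant $-\al\log z$, the functions $\text{Re}$ and $\text{Im}$ of the two differ by constants as well; hence $t\mapsto zt$ carries each path of steepest descent (resp.\ ascent) of $\phi$ precisely to a path of steepest descent (resp.\ ascent) of $\psi$.

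Next I would track the distinguished points. Under $t\mapsto w=zt$ the branch points $t=0$ and $t=1/z$ of $\phi$ are sent to the branch points $w=0$ and $w=1$ of $\psi$, and the saddle point $t_0=\al/((\al+1)z)$ is sent to the saddle point $w_0=\al/(\al+1)$ of $\psi$. Combining this with the first step, the two steepest-ascent paths issuing from $t_0$, which by Lemma~\ref{lemma:steepest descent}(ii) split the $t$-plane into $D_0$ and $D_{1/z}$, are carried onto the two steepest-ascent paths issuing from $w_0$, which split the $w$-plane into $\tilde D_0$ and $\tilde D_1$. Matching the components by which branch point each contains gives $z\cdot D_0=\tilde D_0$ and $z\cdot D_{1/z}=\tilde D_1$. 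Consequently, for $z\neq 0$ one has $1\in D_{1/z}$ if and only if $z=z\cdot 1\in z\cdot D_{1/z}=\tilde D_1$; since $E=\{z:1\in D_{1/z}\}$ by definition, this yields $E=\tilde D_1$.

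The only point requiring care is the interaction with the branch cuts: the equality $\phi(t)=\psi(zt)-\al\log z$ need not hold globally if one insists on principal branches. I would dispose of this exactly as the authors handle Lemma~\ref{lemma:steepest descent}, namely by phrasing the whole argument in terms of $\phi'$ and $\psi'$, which are globally defined rational functions, and by noting that both the notion of steepest descent/ascent path and the resulting partitions $\{D_0,D_{1/z}\}$ and $\{\tilde D_0,\tilde D_1\}$ depend only on these derivatives and are unchanged by an ambient additive constant. Once that is observed, the lemma is essentially a one-line change of variables, and I do not anticipate any substantive obstacle.
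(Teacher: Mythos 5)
Your proposal is correct and follows essentially the same route as the paper: the change of variables $w=zt$, the relation between $\phi$ and $\psi(w)=\log\bigl(w^{\alpha}(1-w)\bigr)$, and the resulting correspondence of steepest descent/ascent paths, with branch ambiguities neutralized by working with the single-valued derivatives. The only cosmetic difference is that you justify the path correspondence via the additive-constant identity $\phi(t)=\psi(zt)-\alpha\log z$ and then transport the whole partition $\{D_0,D_{1/z}\}$ onto $\{\tilde D_0,\tilde D_1\}$, whereas the paper verifies it by the tangent-vector computation $\delta'(s)=\vert z\vert^{-2}\overline{\phi'(\delta(s))}$ applied to a single descent path from $z$ to $0$.
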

\begin{proof} We use the notation of the preceding lemma. Now $z^\alpha g(t)=z^\alpha t^{\al }(1-tz)=w^\alpha(1-w)$, with $w=zt$(using a suitable determination of $\log$). There is then a path of steepest descent from $t=1$ to $t=0$, with respect to $g(t)$, if and only if there is a path of steepest descent from $w=z$ to $w=0$, with respect to $w^\alpha(1-w)$. In more detail, putting $\psi(w)=\log w^\alpha(1-w) $, we see that $z\psi'(zt)=\phi'(t)$, so that the tangent vectors of steepest descent paths are related to each other by multiplication by $\bar z$(compare the note just before the previous lemma). So if $\gamma(s),\ s\in [0,1]$ is a path from $z=\gamma(1)$ to $\gamma(0)=0$ such that the tangent $\gamma'(s)=\overline{\psi'(\gamma(s))}$, then $\delta(s)=z^{-1}\gamma(s)$  is a path from $1$ to $0$, such that $\delta'(s)=z^{-1}\overline{\psi'(z^{-1}\gamma(s))}=\vert z\vert ^{-2} \overline{\phi'(\delta(s))}$. Clearly $\delta$ will then be a parametrization of the path of steepest descent for $g(t)$. Hence  $1\in D_0\iff z\in \tilde D_0,$ which proves the lemma.
\end{proof}
We can see that $E^c$ contains the left half plane, using that $\eta>0$, where $\al=\eta+i\zeta$.\begin{lemma}
\label{lemma:principalbranch}
Suppose that $z=x+iy$ with $x\leq 0$. Then $z\in E^c$.
\end{lemma}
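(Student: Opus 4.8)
The plan is to recast the statement, by means of the two lemmas just proved, as a statement about the steepest–descent picture of the single function $\psi(w)=\al\log w+\log(1-w)$, so that $\phi$ and the parameter–dependent picture disappear. By the preceding lemma $E=\tilde D_1$, and since $\tilde D_0$ and $\tilde D_1$ together fill the plane, $z\in E^{c}$ is equivalent to $z\in\tilde D_0$; by the construction of $\tilde D_0$ this means that there is a path of steepest descent for $\psi$ running from $w=z$ to the branch point $w=0$. As a first observation I would record
$$\text{Re}\,w_0=\text{Re}\,\frac{\al}{\al+1}=\frac{\eta^{2}+\zeta^{2}+\eta}{(\eta+1)^{2}+\zeta^{2}}\in(0,1),$$
the positivity being exactly where $\eta>0$ first enters: the unique saddle $w_0$ of $\psi$ sits strictly inside the right half–plane, leaving the left half–plane ``room'' to lie on the $w=0$ side of the ascent separatrices.

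Next I would bring in the steepest–descent trajectory $\Ga$ of $\psi$ issuing from $z$, along which $\text{Im}\,\psi$ is constant and $\text{Re}\,\psi$ strictly decreasing. Since $\psi'$ vanishes only at $w_0$, and since $\text{Re}\,\psi(w)\to+\infty$ as $|w|\to\infty$ along any level curve of $\text{Im}\,\psi$ escaping to infinity — this again uses $\eta>0$, through $\text{Re}\,\psi(w)=\eta\log|w|-\zeta\arg w+\log|1-w|$ — the trajectory $\Ga$ cannot run off to infinity, and with only $w=0$ and $w=1$ available as points where $\text{Re}\,\psi=-\infty$ it must terminate at one of them. Thus the lemma is equivalent to the assertion that for $\text{Re}(z)\le 0$ the trajectory $\Ga$ ends at $0$ and not at $1$; equivalently, that $\tilde D_1$ misses the open left half–plane, for then the connected set $\{\text{Re}\,w<0\}$ — whose closure contains $0\in\tilde D_0$ — lies in $\tilde D_0$, and the boundary case $\text{Re}(z)=0$ follows because $\tilde D_0$ is closed.

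To carry this out I would analyse the ascent separatrices, i.e.\ the level curve $\{\text{Im}\,\psi=\text{Im}\,\psi(w_0)\}$ of $\text{Im}\,\psi$ through $w_0$. Two ingredients are available. First, on this whole curve $\text{Re}\,\psi\ge\text{Re}\,\psi(w_0)$: by the Cauchy–Riemann equations the gradients of $\text{Re}\,\psi$ and $\text{Im}\,\psi$ are orthogonal, so the only critical point of $\text{Re}\,\psi$ along a level curve of $\text{Im}\,\psi$ is a zero of $\psi'$, whence $w_0$ is the unique such critical point and so the minimum of $\text{Re}\,\psi$ on the curve (which tends to $+\infty$ at its ends). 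Second, the restrictions to the imaginary axis are explicit: for $w=is$, $s\neq0$,
$$\text{Re}\,\psi(is)=\eta\log|s|+\tfrac12\log(1+s^{2})-\tfrac{\pi\zeta}{2}\,\text{sgn}(s),\qquad \text{Im}\,\psi(is)=\tfrac{\pi\eta}{2}\,\text{sgn}(s)+\zeta\log|s|-\arctan s.$$
Following the ascent separatrix out of $w_0\in\{\text{Re}\,w>0\}$, if it were to meet the imaginary axis for the first time at a point $is_{*}$ we would have simultaneously $\text{Im}\,\psi(is_{*})=\text{Im}\,\psi(w_0)$ and $\text{Re}\,\psi(is_{*})\ge\text{Re}\,\psi(w_0)$, and the goal is to show — using $\eta>0$ — that these two relations are incompatible, so that the separatrix, and with it $\tilde D_1$, stays in the closed right half–plane.

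The hard part is precisely this incompatibility. The left half–plane is not forward–invariant under the descent flow — one computes $\text{Re}\,\psi'(is)=\frac{\zeta}{s}-\frac{1}{s^{2}+1}$, which changes sign on the imaginary axis — so no soft, purely topological argument seems available; and when $\zeta\neq0$ the level curves of $\text{Im}\,\psi$ are not asymptotically radial, so one cannot simply read off from the behaviour at infinity that the separatrices remain to the right. What is needed is an honest quantitative comparison of $\text{Re}\,\psi$ and $\text{Im}\,\psi$ on $\{\text{Re}\,w=0\}$ against their values at $w_0$, combining the minimality property with the explicit formulas above (and a separate treatment of the degenerate case $s_{*}$ with $\arctan s_{*}$ extremal). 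Establishing that inequality is the heart of the argument, and it should at the same time locate the portion of $\partial E$ lying near the imaginary axis, feeding into Proposition~\ref{lemma:zero free}.
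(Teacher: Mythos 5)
Your reduction (via the previous lemma, $z\in E^{c}$ amounts to $z$ lying on the $0$--side of the ascent separatrices of $\psi(w)=\al\log w+\log(1-w)$, and $\mathrm{Re}\,w_0\in(0,1)$) is fine as far as it goes, but the argument stops exactly where it would have to start: the ``incompatibility'' of $\mathrm{Im}\,\psi(is_{*})=\mathrm{Im}\,\psi(w_0)$ with $\mathrm{Re}\,\psi(is_{*})\ge \mathrm{Re}\,\psi(w_0)$ on the imaginary axis is announced as the heart of the matter and never proved, so what you have is a programme, not a proof. Moreover it is doubtful that those two recorded conditions alone can ever produce a contradiction: the level set $\{\mathrm{Im}\,\psi=\mathrm{Im}\,\psi(w_0)\}$ generally has several components, and on a component not passing through $w_0$ (an ordinary constant-phase arc running into a branch point or to infinity) $\mathrm{Re}\,\psi$ sweeps out an unbounded range, so points of the imaginary axis satisfying both conditions may exist without lying on the separatrix at all. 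A workable version of your plan would have to exploit the connectivity of the separatrix arc from $w_0$ up to its alleged first crossing $is_{*}$ --- genuinely global information --- and that is precisely what is missing. Note also that the lemma claims $z\in E^{c}$ for $x\le 0$, so you would need the separatrices to avoid the imaginary axis itself, not only the open left half-plane.

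The paper proceeds in the opposite, and much simpler, direction: instead of confining the separatrices, it exhibits for each $z=x+iy$ with $x\le 0$ an explicit ascending curve from $0$ to $z$, namely the radial segment $\gamma(s)=sz$, $s\in[0,1]$. The ascent condition $\mathrm{Re}\,(\psi'(\gamma(s))\gamma'(s))>0$ reduces, after clearing a positive factor, to the positivity on $[0,1]$ of the cubic $E(s)=\eta-2\eta x s+(-x+\eta(x^{2}+y^{2}))s^{2}+(x^{2}+y^{2})s^{3}$, which is immediate since $x\le 0$ and $\eta>0$ make every coefficient nonnegative and the constant term $\eta$ positive. No analysis of the separatrices near the imaginary axis or at infinity is needed. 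If you prefer your separatrix picture, you should at least supply the global step it requires (or instead verify the paper's implicit bridging claim that an ascending segment from $0$ to $z$ forces $z$ into $\tilde D_0$); as submitted, the central inequality of your argument is a genuine gap.
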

 \begin{proof}It suffices to show that the segment $\gamma(s)=sz, s\in [0,1]$ is an ascending curve relative the function $\psi(w)=w^\al(1-w)$.
 For $\gamma$ to be ascending from $0$, it is enough to show that $Re(\overline{\psi'(\gamma(s))\gamma'(s)})>0$, which after some simplifications is equivalent to $E(s):= \eta -2\eta xs+(-x+\eta (x^2+y^2))s^2+(x^2+y^2)s^3\geq 0$. This is a degree three curve in the plane and clearly $\lim_{s\to\infty}E(s)=\infty$, and it will have at most two stationary points, and between them an inflection point.
Now $E''(s)=0$ has the solution $s=\frac{x-\eta(x^2+y^2)}{3(x^2+y^2)}\leq 0$, by assumption. Since furthermore $E'(0)=-2\eta x\geq 0$ and $E(0)=n>0$, $E(s)$ will be strictly increasing with $s\geq 0$.

 \end{proof}

\subsection{Zero-free region}

 We sketch a proof that the complement to $E$ describes a zero-free region.

 \begin{proposition}
 \label{lemma:zero free} If $z\notin E$, then $z$ is not the limit of zeroes of $p_n(z)$.
 \end{proposition}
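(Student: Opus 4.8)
The plan is to reduce the statement, by the substitution $w=zt$ already used above, to a single endpoint estimate. Putting $\psi(w)=w^{\al}(1-w)$ and choosing a branch of $w^{\al n}$ continuous along the segment $[0,z]$, one gets
\[
p_n(z)=(\al n+1)\,z^{-\al n-1}\int_{[0,z]}\psi(w)^{n}\,dw ,
\]
and the virtue of this form is that $\psi(w)^{n}=w^{\al n}(1-w)^{n}$ has $w=0$ as its only branch point. Since $z\notin E=\tilde D_1$, the point $z$ lies in the interior of $\tilde D_0$, so by the analogue of Lemma~\ref{lemma:steepest descent}(ii) for $\psi$ there is a path of steepest descent $\tilde\Ga$ for $\log\psi$ running from $w=z$ to $w=0$ inside $\tilde D_0$; along $\tilde\Ga\setminus\{z\}$ the function $\mathrm{Re}\log\psi$ is strictly decreasing to $-\infty$. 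Also $z\notin E$ forces $z\ne\al/(\al+1)$, the saddle of $\psi$, which lies on $\partial\tilde D_1$, and $z\ne1$, which lies in $\tilde D_1$; and we may assume $z\ne0$ (indeed $p_n(0)=1$, and $z=0$ is trivially not a limit of zeros). Hence $\psi(z)\ne0$ and $\psi'(z)\ne0$.

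Next I would slide the contour from $[0,z]$ onto $\tilde\Ga$ by Cauchy's theorem. Both paths start at the branch point $w=0$ and lie in the simply connected set $\tilde D_0$, and a homotopy between them inside $\tilde D_0$ does not wind around $0$, so $w^{\al n}$, hence $\psi(w)^{n}$, remains single valued throughout the deformation; near $w=0$ one first truncates $\tilde\Ga$ by a short straight segment into the origin, on which $|\psi(w)^{n}|$ is already exponentially negligible. On the resulting contour $\mathrm{Re}\log\psi$ attains its maximum $\log|\psi(z)|$ only at the endpoint $w=z$, where $\log\psi$ is holomorphic with nonvanishing derivative, so a standard endpoint (Watson--type) expansion yields
\[
\int_{[0,z]}\psi(w)^{n}\,dw=\frac{c(z)}{n}\,\psi(z)^{n}\bigl(1+O(n^{-1})\bigr),\qquad c(z)\ne0,
\]
and therefore, using $z^{-\al n-1}\psi(z)^{n}=z^{-1}(1-z)^{n}$ with compatible branches,
\[
p_n(z)=\frac{\al\,c(z)}{z}\,(1-z)^{n}\bigl(1+o(1)\bigr).
\]
In particular $p_n(z)\ne0$ for all sufficiently large $n$. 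Moreover all ingredients are locally uniform on the open set $E^{c}$: the steepest--descent picture, the numbers $\psi(z),\psi'(z)$ and the implied constant $c(z)$ depend continuously on $z$ and stay away from degeneracy on compact subsets, so for every compact $K\subset E^{c}$ there is an $N$ with $p_n$ zero--free on $K$ for $n\ge N$; consequently no point of $E^{c}$ is a limit of zeros of $p_n$, which is the assertion.

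The point requiring real care is the contour deformation together with the uniformity of the endpoint estimate: one must verify that $[0,z]$ can genuinely be deformed onto $\tilde\Ga$ within $\tilde D_0$ without the homotopy encircling $w=0$ (so that the branch of $w^{\al n}$ is preserved), and that the Watson--type expansion at $w=z$ holds uniformly as $z$ ranges over a compact subset of $E^{c}$. The remaining steps are bookkeeping.
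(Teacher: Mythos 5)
Your overall strategy coincides with the paper's (sketched) argument: when $z\notin E$ there is a steepest-descent connection from the endpoint (from $t=1$, or after your substitution $w=zt$, from $w=z$) down to the branch point $0$, so the Euler integral is dominated by its endpoint value and $|p_n(z)|^{1/n}\to|1-z|\neq 0$, which rules out nearby zeros. The paper runs this directly along a sequence of zeros $z_n\to z$, invoking Laplace-type estimates and Lemma \ref{lemma:general}; you instead fix $z$, move the contour onto the descent path of $\psi(w)=w^{\alpha}(1-w)$, apply an endpoint (Watson-type) expansion, and then appeal to local uniformity on compact subsets of $E^{c}$. Your preliminary reductions ($z\neq 0,1$, $z\neq\alpha/(\alpha+1)$, $E^{c}$ open) are fine, and since the paper's own proof is explicitly a sketch with an analogous uniformity issue (its Laplace estimate must be uniform in the varying parameter $z_n$), the asserted uniformity is acceptable at this level, though it deserves a sentence of justification.

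The genuine gap is your justification of the deformation from $[0,z]$ to $\tilde\Gamma$. Simple connectivity of $\tilde D_0$ is not the relevant property: $w=0$ is an \emph{interior} point of $\tilde D_0$, so a loop lying in $\tilde D_0$ can perfectly well wind around $0$ (the obstruction lives in $\tilde D_0\setminus\{0\}$, which is not simply connected), and nothing guarantees that the segment $[0,z]$ stays inside $\tilde D_0$ in the first place. Moreover the winding is not a hypothetical worry: since $\zeta\neq 0$, the descent path into $0$ is a logarithmic spiral ($\mathrm{Im}(\alpha\log w+\log(1-w))$ constant forces $\arg w\sim -(\zeta/\eta)\log|w|$), so the loop formed by $[0,z]$ and any truncation of $\tilde\Gamma$ near $0$ has winding number about $0$ growing like $|\log\epsilon|$, and a single-valued branch of $w^{\alpha n}$ cannot be used on a region containing both contours; branch mismatches of the form $e^{2\pi i k\alpha n}$, of modulus $e^{-2\pi k\zeta n}$ and hence exponentially large or small in $n$, are exactly what your remark about a ``short straight segment into the origin'' glosses over. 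The step is salvageable, but it needs the device the paper uses for $I_1$: switch from the straight segment to the spiral at an actual intersection point $w_\epsilon$ with $|w_\epsilon|\le\epsilon$ (so that no deformation near $0$ is performed at all), bound the contribution of $[0,w_\epsilon]$ as in (\ref{eq:Lepsilon}), or alternatively estimate the connecting circular arc carrying the analytically continued branch and check that $\mathrm{Re}\,\phi$ along it stays far below $\mathrm{Re}\,\phi(z)$. As written, the contour move -- the heart of your argument -- is asserted with an invalid reason, so this is the point you must repair.
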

 \begin{proof}
(sketch) Assume that $z_n\to z$, is a sequence of zeros $p_n(z)=0$, where $z\notin E$. Hence $z_n\notin E$ for large enough $n$, and so there will be paths $\gamma_n$ of steepest ascent from $0$ to $1$, and this means, using standard Laplace-type techniques, that the integral
 $$
0=(\vert  \int_0^1 [t^{\al }(1-tz_n)]^ndt\vert )^{1/n},
 $$
 will have limit $\vert 1-z\vert $ (compare the proof of Lemma \ref{lemma:general}) which will be non-zero. This is a contradiction.
 \end{proof}

\subsection{The first integral} Now assume that $z\in E$. Then there is a path of steepest ascent from $1/z$ to $1$, and a path from $0$ to $1/z$, first as a path of steepest ascent to the saddle point, and then as a path of steepest descent to $1/z$.
We deform the integral accordingly to
\begin{equation}\label{deform}
\int_0^1 [g(t)]^ndt=\int_0^{\frac{1}{z}} [g(t)]^ndt+\int_{\frac{1}{z}}^1 [g(t)]^ndt=I_1+I_2
\end{equation}

We will now detail the path from $0$ to $1/z$ in the integral. Note that the spiral $\gamma_1$ around $0$ will intersect the line $L$ between $0$ and $1/z$ in an infinite set of points clustering towards $0$. Choose an arbitrary $\epsilon>0$. Start in $0$, walk along $L$  a distance at most $\epsilon $, until one of the intersection points $w_\epsilon$with $\gamma_1$ is reached. Call this part $L_\epsilon$, and continue along the part $\gamma_\epsilon$ of $\gamma_1$ that remains to $z_0$ and then to $1/z$ along $\gamma_2$. Starting with the principal branch of the logarithm that is defined $]0,1]$, at $z_0=\alpha/(\alpha+1)1/z$ and at $1/z$(by lemma \ref{lemma:principalbranch}), we may analytically continue this one along $\gamma_1$ to $w_\epsilon$; we interpret $t^\alpha:=e^{\alpha \log t}$ in this sense. Now if $\epsilon$ is small enough, $\vert1-zt\vert <2$, for $t\in L_\epsilon $, and so there exists $M>0$ such that $\vert t^{\al n}(1-zt)^{n}\vert <M t^\eta$ , for $t\in L_\epsilon $($M$ will depend on which branch of the logarithm we use). As a consequence, using that $\alpha=\eta+\zeta i$, with $\eta>0$,
the integral
\begin{equation}
\label{eq:Lepsilon}
\vert \int_0^{w_\epsilon}t^{\al n}(1-zt)^{n} dt\vert \leq \frac{M\epsilon^{\eta +1}}{\eta+1}.
\end{equation}

The integral between $w_\epsilon$ and $1/z$ is the one we will estimate with the saddle point method. Call it $I_1$. Note that the paths $\gamma_\epsilon$ and $\gamma_2$ are finite.

Now we rewrite the integral as

$$ \int_0^{\frac{1}{z}} [g(t)]^ndt=\int_0^{\frac{1}{z}}e^{n\phi(t)}dt,$$

where $\phi(z)=\log (t^\al(1-zt)).$ Recall that the saddle point was $z_0=\frac{\al}{(\al+1)z}$. As a consequence of the description of the paths of steepest descent, and standard techniques we may now deduce the value of the integral from Lemma \ref{prop:saddlepoint}.

\begin{lemma}
\label{lemma:saddlepoint} Let $z\neq 0$, and use the notation above. For any $\epsilon>0$,
\begin{equation}
\label{eq:firstintegral}
\vert \int_0^{\frac{1}{z}} [g(t)]^ndt\vert = g(z_0)^n\sqrt{\frac{2\pi}{n|\phi''(t_0)|}}+O(1/n)+\epsilon.
\end{equation}
\end{lemma}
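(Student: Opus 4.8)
\noindent\emph{Proof plan.}
The plan is to estimate $I_1=\int_0^{1/z}[g(t)]^n\,dt$ by splitting the contour described just before the statement at the point $w_\epsilon$: the part from $0$ to $w_\epsilon$ along the segment $L_\epsilon$, and the part from $w_\epsilon$ to $1/z$ running first along the arc $\gamma_\epsilon\subset\gamma_1$ up to the saddle point $t_0=z_0=\frac{\al}{(\al+1)z}$ and then along the steepest-descent arc $\gamma_2$ down to $1/z$. By additivity of the path integral, $I_1=\int_{L_\epsilon}[g(t)]^n\,dt+\int_{\gamma_\epsilon\cup\gamma_2}[g(t)]^n\,dt$, and I would bound the first term crudely while evaluating the second by the saddle point method; everything else --- the validity of the deformation \eqref{deform} and the coherent choice of branch of $\log t$ along the contour --- is already in place from the discussion preceding the statement.

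For the first term, the estimate \eqref{eq:Lepsilon} gives $\bigl|\int_0^{w_\epsilon}t^{\al n}(1-zt)^n\,dt\bigr|\le\frac{M\epsilon^{\eta+1}}{\eta+1}$, with $M$ independent of $n$ (on $L_\epsilon$ one has $|g(t)|<1$ for $\epsilon$ small, so $|[g(t)]^n|\le|g(t)|\le M|t|^\eta$); since $\eta>0$ this is $O(\epsilon^{1+\eta})$ and accounts for the additive ``$+\epsilon$'' in \eqref{eq:firstintegral}. For the second term, write it as $\int_{w_\epsilon}^{1/z}e^{n\phi(t)}\,dt$ with $\phi(t)=\al\log t+\log(1-zt)$ and apply Proposition \ref{prop:saddlepoint}. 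Its hypotheses hold: $\phi$ is holomorphic near the contour; $t_0=z_0$ is the unique saddle point, simple since $\phi''(t_0)=-(1+\al)^3z^2/\al\neq0$; by Lemma \ref{lemma:steepest descent} the contour reaches $t_0$ along the steepest-descent arc $\gamma_\epsilon$ (playing the role of $\gamma_1$ in Proposition \ref{prop:saddlepoint}, with marked point $a=w_\epsilon$) and leaves along the steepest-descent arc $\gamma_2$ (with $b=1/z$), and both $\gamma_\epsilon$ and $\gamma_2$ are finite. The proposition then gives
\[
\int_{w_\epsilon}^{1/z}e^{n\phi(t)}\,dt=\sqrt{\frac{2\pi}{-\phi''(t_0)}}\,n^{-1/2}e^{n\phi(t_0)}\bigl(1+O(n^{-1})\bigr),
\]
and since $e^{n\phi(t_0)}=g(z_0)^n$ for the chosen branch, taking moduli yields $\bigl|\int_{w_\epsilon}^{1/z}[g(t)]^n\,dt\bigr|=|g(z_0)|^n\sqrt{\tfrac{2\pi}{n\,|\phi''(t_0)|}}\bigl(1+O(n^{-1})\bigr)$. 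Combining this with the bound for the first term via the triangle inequality produces \eqref{eq:firstintegral}.

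The step I expect to need the most care is the application of Proposition \ref{prop:saddlepoint} in this slightly nonstandard geometry: the ``incoming'' arc $\gamma_\epsilon$ is a piece of the steepest-descent curve $\gamma_1$, which spirals into the branch point $0$ with infinitely many windings (see the proof of Lemma \ref{lemma:steepest descent}), so one cannot run the contour all the way to $0$ --- this is exactly why the cutoff $w_\epsilon$ and the auxiliary $\epsilon$ are introduced --- and one must be sure that the analytically continued logarithm used along $L_\epsilon\cup\gamma_\epsilon\cup\gamma_2$ is the one matching the principal branch of the Euler integral on $[0,1]$, so that the broken contour genuinely computes $I_1$ from \eqref{deform}. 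Once the branch is pinned down, both the verification of the hypotheses of Proposition \ref{prop:saddlepoint} and the passage to moduli are routine.
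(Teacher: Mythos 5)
Your proposal is correct and follows essentially the same route as the paper: the text preceding the lemma performs exactly your decomposition, splitting the contour at $w_\epsilon$, absorbing the $L_\epsilon$ piece via the bound \eqref{eq:Lepsilon} into the additive $\epsilon$, and applying Proposition \ref{prop:saddlepoint} to the finite path $\gamma_\epsilon\cup\gamma_2$ through the saddle point $t_0$ with the analytically continued branch of $\log t$. The paper merely leaves the final assembly implicit (``standard techniques''), so your write-up is the same argument spelled out.
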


\subsection{The Second integral}
Now we turn to the second integral
$$I_2(z)=\int_{\frac{1}{z}}^1[g(t)]^ndt,$$
 where $g(t)=t^\al(1-zt),$ and we assume that $z\in E$, so that we have a path of steepest ascent from $1/z$ to $1$. In particular, the path will never wind around $0$, so we may use the principal value determination of $\log$ for the definition of $t^\al$. We heuristically see that then the value $(1-z)^n$ at $t=1$ should dominate the integral asymptotically, and we will also provide details of a proof of this. We start with the following change of variables idea that we have taken from \cite{KDPD1}.

Letting  $$g(t)=t^\al(1-zt)=s(1-z), 0\leq s\leq 1,$$ we have that $s=0\Leftrightarrow t=\frac{1}{z}$ and $ s=1\Leftrightarrow t=1$
and that this definition implicitly defines  a path, say $\Delta,$ by  $t=t(s),$ between $\frac{1}{z}$ and $1$.
\begin{lemma}
\label{lemma:changepath}
$\int_{\frac{1}{z}}^1[g(t)]^ndt=\int_\Delta [g(t)]^ndt$
\end{lemma}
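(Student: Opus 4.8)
The strategy is to recognise that the path $\Delta$ produced by the substitution $g(t)=s(1-z)$ is nothing but the contour of steepest ascent from $1/z$ to $1$ along which $I_2$ was defined, merely reparametrised; once this is established the two sides of the claimed identity are literally the same contour integral. (If one prefers not to identify the two paths, it is enough to see that they are homotopic with endpoints held fixed inside a simply connected domain on which $[g(t)]^n$ is holomorphic, and then to apply Cauchy's theorem.)

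First I would describe the steepest--ascent path $\si$ from $1/z$ to $1$, which exists because $z\in E$. It is a level curve of $\text{Im}\,\phi$, so $\text{Im}\,\phi\equiv\text{Im}\,\phi(1)=\text{Im}\log(1-z)$ along it, while $\text{Re}\,\phi=\log\vert g\vert$ increases strictly from $-\infty$ at $t=1/z$ to $\log\vert 1-z\vert$ at $t=1$. Hence $g$ maps $\si$ homeomorphically onto the segment $\{\,s(1-z):0\le s\le 1\,\}$, and $s\mapsto t(s):=(g|_{\si})^{-1}(s(1-z))$ is a parametrisation of $\si$ with $t(0)=1/z$, $t(1)=1$ and $g(t(s))=s(1-z)$; in other words $\si$, so parametrised, \emph{is} the path $\Delta$. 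To see that $\Delta$ is unambiguously defined one continues the solution of $g(t)=s(1-z)$ from $t=1$ (where $g'$ does not vanish, $z$ being distinct from the exceptional value $\al/(\al+1)$) as $s$ decreases to $0$: the continuation is unique as long as it stays off the saddle point $z_0=\frac{\al}{(\al+1)z}$, and since $\text{Im}\,\phi$ is constant and $\text{Re}\,\phi$ is strictly monotone along it, it must lie in the same component of $\{\text{Im}\,\phi=\text{Im}\log(1-z)\}$ as $\si$ and hence coincide with $\si$.

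The only branch ambiguity in $[g(t)]^n=t^{\al n}(1-zt)^n$ is in the factor $t^{\al n}=e^{\al n\log t}$, since $(1-zt)^n$ is a polynomial; and since $\Delta=\si$ does not wind around $0$, we may fix the principal branch of $\log$ on a slit plane $\bC\setminus R$, with $R$ a ray issuing from $0$ that meets neither $\Delta$ nor the point $1$, on which $[g(t)]^n$ is single valued and holomorphic. Cauchy's theorem (the two contours being equal, hence in particular homotopic with endpoints fixed in this domain) now gives
\[
\int_{\frac{1}{z}}^{1}[g(t)]^n\,dt=\int_{\Delta}[g(t)]^n\,dt.
\]
The delicate point, which I expect to be the main obstacle, is to verify that the implicit continuation defining $\Delta$ never meets the saddle point $z_0$ and never leaves the principal sheet before reaching $t=1$; this forces one to combine the global description of the steepest paths from Lemma \ref{lemma:steepest descent} with the hypothesis $z\in E$, the single exceptional value $z=\al/(\al+1)$ being harmless since it lies on the clustering curve of the Theorem.
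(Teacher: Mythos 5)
Your proposal is correct in substance but proves the lemma by a genuinely different route than the paper. The paper's own proof is a one-line contour-deformation argument: the integrand $[g(t)]^n$ is analytic away from the single branch point $t=0$, and since $|g(t)|=O(|t|^{\eta})$ with $\eta>0$, the integral over a small circle about $0$ vanishes as the radius shrinks, so the path from $1/z$ to $1$ may be deformed onto $\Delta$ (Cauchy's theorem, with the branch point rendered harmless by the estimate); no attempt is made to identify $\Delta$ with any particular contour. You instead show that $\Delta$, defined by continuing the solution of $g(t)=s(1-z)$ from $t=1$, \emph{is} the steepest-ascent path $\sigma$ from $1/z$ to $1$ (reparametrised via $g$), so the two integrals coincide tautologically. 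Your identification is correct for $z$ in the interior of $E$ and in fact buys more than the lemma itself: it shows $t(s)$ is smooth and stays away from the saddle $z_0$, which is exactly what the paper silently needs in the next step, where the change of variables produces the denominator $\al-(\al+1)zt$ in $K(z)$. The price is the verification you flag but do not complete: that the continuation never meets $z_0$. For interior points of $E$ this follows from Lemma \ref{lemma:steepest descent}(ii), since a descent trajectory reaches the saddle only if its starting point lies on one of the ascent paths $\delta_1,\delta_2$, i.e.\ only for $z$ on the boundary of $E$; you should state this explicitly rather than leave it as an expected obstacle (the degenerate value $z=\al/(\al+1)$, where $t=1$ coincides with the saddle, is likewise a boundary phenomenon). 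The paper's argument, by contrast, is indifferent to which solution branch defines $\Delta$ and to the saddle, at the cost of being sketchier about branch bookkeeping when the deformation sweeps across $t=0$; neither write-up is fully airtight, but both reach the identity.
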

\begin{proof}
Since $$|g(t)|=O(|t|^\eta)~\text{and}~\eta>0,$$
the integral on a circle around $0$ goes to $0$ as the radius of the circle decreases. Hence we may move the path of integration freely in the complex plane.
\end{proof}
We have that
$$t^{\al-1}(\al-(\al+1)zt)dt=(1-z)ds.$$
Hence
$$I_2=(1-z)^n\int_0^1\frac{t(1-zt)s^{n-1}}{\al-(\al+1)zt}ds=(1-z)^nK(z)$$
\begin{equation}
\label{eq:second integral}
(I_2)^\frac{1}{n}=(1-z)(K(z))^\frac{1}{n}
\end{equation}
 
 The calculation of the limit of $I_2^\frac{1}{n}$ is completed by showing
 \begin{equation}\label{lim1}
 \lim_{n \to \infty}|K(z)|^\frac{1}{n}=1.
 \end{equation}
  Set
  $$ f(s)=\frac{t(1-zt)}{\al-(\al+1)zt}$$
so that
  $$k(z)=\int_0^1 f(s)s^{n-1}ds.$$
Then \eqref{lim1} is a consquence of the following general lemma.
   \begin{lemma}
   \label{lemma:general}
   Assume that $f(z)$ is a non zero function analytic in a domain $D,$ containing the unit interval $[0,1]$ on the real axis. Then
   $$\lim_{n\to \infty}\Bigg(\int_0^1f(s)s^{n-1}ds\Bigg)^{\frac{1}{n}}=1$$
   \end{lemma}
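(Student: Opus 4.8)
The plan is to prove the equivalent statement that $\tfrac1n\log\bigl|\!\int_0^1 f(s)s^{n-1}\,ds\bigr|\to 0$, by establishing separately that $\limsup_n|A_n|^{1/n}\le 1$ and $\liminf_n|A_n|^{1/n}\ge 1$, where $A_n:=\int_0^1 f(s)s^{n-1}\,ds$. The upper bound is immediate: since $f$ is analytic on $D\supset[0,1]$ it is continuous on the compact interval $[0,1]$, so $M:=\max_{[0,1]}|f|<\infty$ and $|A_n|\le M\int_0^1 s^{n-1}\,ds=M/n$, whence $|A_n|^{1/n}\le(M/n)^{1/n}\to1$.

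The substance is in the matching lower bound, and it is here that the hypotheses (analyticity and $f\not\equiv0$) genuinely enter, through the finiteness of the order of vanishing of $f$ at the right endpoint $s=1$. First I would dispose of the generic case $f(1)\ne0$: a single integration by parts gives $A_n=\frac{f(1)}{n}-\frac1n\int_0^1 f'(s)s^n\,ds$, and since $f'$ is bounded on $[0,1]$ the last term is $O(1/n^2)$; hence $A_n\sim f(1)/n$ and $|A_n|^{1/n}\to1$. For the general case, let $m\ge0$ be the order of vanishing of $f$ at $s=1$; because $f$ is analytic on the domain $D$ and not identically zero, $m$ is finite, with $f(1)=f'(1)=\dots=f^{(m-1)}(1)=0$ and $f^{(m)}(1)\ne0$. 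Iterating the integration by parts $m$ times, each step producing a boundary term at $s=1$ which vanishes until the last step, yields
\[
A_n=\frac{(-1)^m}{n(n+1)\cdots(n+m-1)}\int_0^1 f^{(m)}(s)\,s^{n+m-1}\,ds ,
\]
and one further integration by parts, using that $f^{(m+1)}$ is bounded on $[0,1]$, shows the remaining integral equals $\frac{f^{(m)}(1)}{n+m}+O(1/n^2)\sim f^{(m)}(1)/n$. Since $n(n+1)\cdots(n+m-1)\sim n^m$, we get $A_n\sim(-1)^m f^{(m)}(1)\,n^{-(m+1)}$, so $\tfrac1n\log|A_n|=-\tfrac{m+1}{n}\log n+\tfrac1n\log|f^{(m)}(1)|+o(1)\to0$, giving $\liminf_n|A_n|^{1/n}\ge1$ and completing the proof.

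An equivalent, slightly more hands-on route avoids the iterated integration by parts: split $\int_0^1=\int_0^{1-\delta}+\int_{1-\delta}^{1}$ for a small $\delta>0$ below the radius of convergence of the Taylor expansion of $f$ at $s=1$. The first piece is bounded by $M(1-\delta)^n/n$, hence exponentially negligible; on $[1-\delta,1]$ one writes $f(s)=a_m(1-s)^m+O((1-s)^{m+1})$ with $a_m\ne0$ and uses $\int_0^1(1-s)^k s^{n-1}\,ds=B(n,k+1)\sim k!\,n^{-(k+1)}$, together with the exponential smallness of $\int_0^{1-\delta}(1-s)^k s^{n-1}\,ds$, to recover the same asymptotics $A_n\sim a_m\,m!\,n^{-(m+1)}$. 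The only real obstacle in either approach is the bookkeeping that confirms the vanishing order $m$ at $s=1$ is finite and that the successive error terms are of strictly lower order; both follow from $f$ being analytic with $f\not\equiv0$, and since the integration is over the real segment $[0,1]$ no contour-deformation subtlety is involved.
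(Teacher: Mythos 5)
Your proof is correct, but it follows a genuinely different route from the paper's. You exploit analyticity at the endpoint: by the identity theorem (using that $D$ is connected and $f\not\equiv 0$) the order $m$ of vanishing of $f$ at $s=1$ is finite, and iterated integration by parts then gives the precise asymptotics $\int_0^1 f(s)s^{n-1}\,ds \sim (-1)^m f^{(m)}(1)\,n^{-(m+1)}$, from which the limit of the $n$-th root follows at once; your second route via the Taylor expansion at $s=1$ and Beta-function asymptotics is an equivalent repackaging of the same idea. The paper instead never differentiates $f$: after rotating by a power of $i$ so that $\mathrm{Re}\,f$ is not identically zero on $[0,1]$, it uses that this real-analytic function has finitely many zeros on $[0,1]$, hence a fixed sign on some $[\delta,1)$, and bounds $\mathrm{Re}\!\int_0^1 f(s)s^{n-1}ds$ from below by roughly $C\delta_1^n/n$ on a subinterval $[\delta_2,\delta_1]$ where $|\mathrm{Re}\,f|\geq C$; letting the level $C\to 0$ forces $\delta_1\to 1$ and yields $\liminf |I(n)|^{1/n}\geq 1$, which together with the trivial bound $|I(n)|\leq M/n$ gives the claim. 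The trade-off: the paper's argument is softer (it only needs continuity plus eventual sign-definiteness of a rotated real part near $1$, and produces only the $\liminf$/$\limsup$ sandwich), whereas yours extracts the exact polynomial decay rate, shows in particular that the integral is nonzero for all large $n$ (relevant since the lemma as stated takes an $n$-th root of the integral itself rather than of its modulus), and is arguably the cleaner proof; both correctly use analyticity only through properties of $f$ on and near $[0,1]$.
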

   \begin{proof}
   Let $I(n)=\int_0^1f(s)s^{n-1}ds.$ Since there is a positive number $M$, $0<M$ such that $|f(s)|\leq M$ if $s\in[0,1],$ we have that $$|I(n)|^{\frac{1}{n}}\leq M^{\frac{1}{n}},$$
and so
$$\limsup_{n\to \infty}|I(n)|^{\frac{1}{n}}\leq 1.$$
We may assume that $Re f(z)$ is not identically zero on $[0,1]$, since not both $Re f(z)$ and $Im f(z)$  can be identically zero on $[0,1]$ and multiplication by $i$ exchanges $Re f(z)$ and $Im f(z)$  and does not change $|I(n)|.$ We may also assume that $f(z)$ is not constant, since otherwise the result is immediate.
Then $\{z:Re f(z)=0\}$ has a finite intersection with$[0,1]$. Let $\delta$ be the largest element in this intersection which is strictly less than 1, so that $h(z):=Re f(z)$ is of constant sign in $[\delta,1)$.

We may assume that $h(s)> 0$ in $[\delta,1],$ if necessary, by multiplying $f(z)$ with $-1$.
\\ Now, $h(s)$ is bounded on $[0,1]$ so there is $M>0$ such that $-M\leq h(s)\leq M.$ Furthermore, there is to each $C:$
$$0<C<\max \{h(s), s\in[\delta,1]\},$$
an interval $[\delta_2,\delta_1]\subset [\delta,1)$ such that $\delta_2\leq s\leq \delta_1\Longrightarrow  h(s)\geq C.$ We may take $\delta_1$ maximal with this properties (if $h(1)\ne 0,$ then $\delta_1=1$ if $C $ is small enough.)
\\ Now estimate
$$Re(I(n))\geq 
\int_0^\delta h(s)s^{n-1}ds+\int_\delta^1h(s)s^{n-1}\geq \int_0^\delta-Ms^{n-1}ds+\int_{\delta_2}^{\delta_1}Cs^{n-1}ds$$
$$ =-M\frac{\delta^n}{n}+C\frac{\delta_1^n}{n}-C\frac{\delta_2^n}{n}
\\=\frac{\delta_1^n}{n}(-(\frac{\delta}{\delta_1})^nM-(\frac{\delta_2}{\delta_1})^nC+C).$$
Since $\frac{\delta}{\delta_1}<1$ and $\frac{\delta_2}{\delta_1}<1,$ there exists $n_0$ such that $n>n_0$
$$Re I(n)\geq \frac{\delta_1^n}{n}\frac{C}{2}\Longrightarrow \liminf|I(n)|^{\frac{1}{n}}\geq \delta_1$$
Now, $\delta_1$ depends on the choice of C and clearly $\lim_{c\to 0}\delta_1=1.$ Hence the lemma is proved.
  \end{proof}
  \subsection{Proof of the theorem}
  A zero $z_n$ of the polynomial $p_n(z)$ in (\ref{hyp}) satisfies, by  (\ref{deform}),   the equation $I_1=-I_2$.
  Supposing that we have a sequence of zeros $z_n\to z\in E$, we get that
  $$
  \lim_{n\to\infty}\vert I_1\vert^{\frac{1}{n}} = \lim_{n\to\infty}\vert I_2\vert^{\frac{1}{n}} =\vert 1-z\vert,
  $$
  by (\ref{lim1}) and (\ref{eq:second integral}).
 It follows from formula (\ref{eq2}), that  $z=0$ is not a zero of $p_n$.
  Hence Lemma \ref{lemma:steepest descent} and \ref{lemma:saddlepoint} apply, and we have that
 $$   \lim_{n\to\infty}\vert I_1\vert^{\frac{1}{n}} =\vert t_0^\alpha(1-zt_0)\vert.$$

 Hence $$
\vert(\frac{\al}{(\al+1)})^{\al}(\al+1)^{-1}\vert=\vert z^\al\vert \vert1-z\vert.
 $$
 (the form of the left hand side is chosen to avoid problems with changing the branch of the logarithm, note that $\frac{\al}{\al+1}$ is in the right half-plane, as well as $z$, by Lemma \ref{lemma:principalbranch}.)
This finishes the proof.

\bibliographystyle{plain}
\bibliography{xx}

  \end{document}